\theoremstyle{definition}
\newtheorem{thm}{Theorem}[section]
\newtheorem{lemm}[thm]{Lemma}
\newtheorem{prop}[thm]{Proposition}
\newtheorem{ques}[thm]{Question}
\newtheoremstyle{case}
{3pt}
{3pt}
{}
{}
{\itshape}
{:}
{.5em}
{}
\theoremstyle{case} 
\newtheorem{case1}{Case}
\newtheorem{case2}{Case}
\theoremstyle{remark}
\DeclareMathOperator{\Mod}{mod}
\DeclareMathOperator{\diam}{diam}
\DeclareMathOperator{\dist}{dist}
\newcommand{\apmd}[2][]{															
	\ifthenelse{\equal{#1}{}}%
					{ \operatorname{N}_{#2}	}%
					{ \operatorname{N}_{#1}(#2) 	}}
\begin{document}

	\title{Remarks on conformal modulus in metric spaces}
	\author{Matthew Romney}
	\address{Mathematics Department, Stony Brook University, Stony Brook NY, 11794, USA.}
    \email{matthew.romney@stonybrook.edu}
	
	\date{\today}
\subjclass[2020]{30L10}
\keywords{Conformal modulus, quasiconformal mappings.}
	\maketitle
	
	\begin{abstract}
	    We give an example of an Ahlfors $3$-regular, linearly locally connected metric space homeomorphic to $\mathbb{R}^3$ containing a nondegenerate continuum $E$ with zero capacity, in the sense that the conformal modulus of the set of nontrivial curves intersecting $E$ is zero. We discuss this example in relation to the quasiconformal uniformization problem for metric spaces.  
	\end{abstract}

	
	

	\section{Introduction}
	
	The \textit{quasiconformal uniformization problem} asks one to determine which metric spaces can be mapped onto a standard model space, such as $\mathbb{R}^n$ or $\mathbb{S}^n$ for some $n\geq 2$, by a quasiconformal homeomorphism. It is inspired by the classical uniformization theorem, which states that every simply connected Riemann surface is conformally equivalent to either the unit disk, the plane or the $2$-sphere. In the metric space setting, quasiconformal homeomorphisms can be defined using the notion of \textit{conformal modulus} of a curve family, which measures the size of a curve family in a conformally invariant way. This notion provides the basis for to the analytic approach to the uniformization problem in dimension two due to Rajala \cite{Raj:17} and generalized in \cite{MW:21,NR:22,NR:21}. In particular, it is shown in \cite{NR:22} that any metric surface of locally finite Hausdorff $2$-measure admits a \textit{weakly quasiconformal} parametrization by a smooth Riemannian surface of constant curvature. See \Cref{sec:background} for additional discussion. By \textit{metric surface}, we mean a metric space homeomorphic to a $2$-manifold (with or without boundary). We use the term \textit{metric $n$-manifold} with a similar meaning. We assume that all manifolds are connected. 
	
	One reason for the effectiveness of conformal modulus methods in the two-dimensional setting is the abundance of rectifiable curves on any metric surface $X$ with locally finite Hausdorff $2$-measure. Indeed, a simple application of the co-area formula for metric spaces yields the following consequence: for any two disjoint nondegenerate continua $E,F \subset X$, the conformal modulus of the family of curves intersecting both $E$ and $F$ is positive. See Proposition 3.5 in \cite{Raj:17}. Another way to phrase this is that any continuum in $X$ has positive capacity. A quantitative version of this fact was shown by the author with K. Rajala \cite{RR:19} and subsequently sharpened and generalized by Eriksson-Bique and Poggi-Corraddini \cite{EBPC:21} (\Cref{thm:ebpc} below). We also refer the reader to Semmes \cite{Sem:96} for one of the seminal works on the topic of curves in metric spaces. 
	
	The purpose of this note is to observe that the above property of conformal modulus in the two-dimensional setting does not generalize to higher dimensions, even under the nice geometric assumptions of Ahlfors regularity and linear local connectivity. Given two disjoint continua $E,F$ in a metric space $X$, we let $\Gamma_E$ denote the family of nontrivial (i.e., nonconstant) curves intersecting $E$ and $\Gamma(E,F)$ denote the family of curves joining $E$ to $F$.
	
	\begin{thm} \label{thm:example}
	There is a Ahlfors $3$-regular, linearly locally connected metric space $X$ homeomorphic to $\mathbb{R}^3$ containing a nondegenerate continuum $E$ with zero capacity in the sense that $\Mod_3 \Gamma_E = 0$. In particular, if $F$ is any other continuum in $X$ disjoint from $E$, then $\Mod_3 \Gamma(E,F)=0$.  
	\end{thm}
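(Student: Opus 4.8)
The plan is to exhibit $X$ by an explicit iterative construction on $\mathbb{R}^3$, concentrated near a prescribed nondegenerate continuum $E$, and to force the conclusion through the \emph{serial law} for modulus. Concretely, I would arrange that $E$ is shielded from the rest of $X$ by a nested sequence of ``rings'' $R_1,R_2,\dots$, each a topological shell separating $E$ from a fixed basepoint, with the inner complementary regions decreasing to $E$. Writing $\Gamma_k$ for the family of curves joining the two boundary components of $R_k$, any nonconstant curve meeting $E$ has a point on $E$ and, having positive length, also reaches outside some fixed $R_k$; once the rings accumulate at $E$ it therefore contains a subcurve in $\Gamma_k$ for all large $k$. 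Since the $R_k$ lie in pairwise disjoint Borel sets, the serial law gives
\[
\Mod_3 \Gamma_E \;\le\; \Bigl(\sum_k (\Mod_3 \Gamma_k)^{-1/2}\Bigr)^{-2}.
\]
Thus it suffices to build the rings so that $\Mod_3 \Gamma_k \le C$ for infinitely many $k$, for then the series diverges and the bound vanishes.

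The first step, constructing $X$ and the rings, is where essentially all the difficulty lies, and I expect it to be the main obstacle. The naive attempt, taking $E$ to be a segment with metric distance-shells $N_{2^{-k}}(E)\setminus N_{2^{-k-1}}(E)$ as rings, fails decisively: for any positive-diameter continuum in an Ahlfors $3$-regular space a packing argument forces $\mathcal{H}^3(N_\delta(E)) \gtrsim \diam(E)\,\delta^2$, and because the distance function to $E$ is $1$-Lipschitz these shells have crossing width comparable to $\delta$, so the cylinder estimate gives $\Mod_3 \Gamma_k \approx (\text{cross-sectional area})/(\text{width})^2 \approx \diam(E)\,2^{k}\to\infty$. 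Equivalently, no radial weight $\rho \approx \dist(\cdot,E)^{-1}$ can be placed in $L^3$. The rings therefore cannot be thin metric annuli; they must be ``fat,'' with crossing width comparable to their own diameter, so that the crossing modulus stays bounded while their boundaries still converge to $E$. I would accordingly assemble $X$ from a self-similar family of model blocks glued along such fat collars, so that passing from one generation to the next reproduces, at a fixed fraction of the scale, a separating ring of the same bounded modulus; the continuum $E$ is the resulting limit set.

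Granting such a construction, the verification of the ambient hypotheses should follow from its self-similar, uniformly bounded-geometry nature. For Ahlfors $3$-regularity I would equip $X$ with the natural self-similar (Hausdorff-type) measure and check the mass bound $\mu(B(x,r))\approx r^3$ on each generation by comparison with the finitely many model blocks, the estimate being scale-invariant by construction. Linear local connectivity (both the LLC$_1$ and LLC$_2$ conditions) should likewise reduce to a uniform statement about a single model block together with the gluing pattern: the fat collars are designed precisely so that points can be joined, and complements of balls kept connected, at a controlled loss of scale, and this is inherited across all generations. These are the routine but lengthy parts of the argument.

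Finally, the last assertion of the theorem is immediate by monotonicity of modulus. If $F$ is any continuum disjoint from $E$, then every curve joining $E$ to $F$ is nonconstant (its endpoints lie in the disjoint sets $E$ and $F$) and meets $E$, so $\Gamma(E,F)\subseteq \Gamma_E$ and hence $\Mod_3 \Gamma(E,F)\le \Mod_3 \Gamma_E = 0$. To summarize, the conceptual core is the serial-law reduction, which is elementary; the real content, and the step I anticipate being hard, is designing a single self-similar space whose separating rings are simultaneously fat enough to have uniformly bounded $3$-modulus and compatible with Ahlfors $3$-regularity and linear local connectivity.
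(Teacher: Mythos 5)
Your reduction is sound as far as it goes: the serial law with exponent $-1/2$ for $p=3$, the decomposition of $\Gamma_E$ according to the first ring a curve exits, and the monotonicity argument $\Gamma(E,F)\subset\Gamma_E$ for the final sentence are all correct. (One small omission: a nonconstant curve contained in $E$ never crosses any ring, so that subfamily must be discarded separately; since $\mathcal{H}^3(E)=0$ it has zero modulus, but it should be said.) The genuine gap is that the construction --- which you yourself identify as carrying essentially all the difficulty --- is not supplied, and the one concrete design principle you offer for it cannot work. If the rings $R_k=\overline{U_k\setminus U_{k+1}}$ separate a continuum $E$ with $\diam E=d>0$ from a basepoint and the sets $U_k$ shrink to $E$, then a path from $\partial U_1$ to $E$ crosses every ring, so the crossing widths satisfy $\sum_k w_k\le\dist(\partial U_1,E)<\infty$ and hence $w_k\to 0$, while $\diam R_k\ge\diam\partial U_{k+1}\gtrsim d$ once $U_{k+1}$ is a small neighborhood of $E$. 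So ``crossing width comparable to their own diameter'' is impossible for a nondegenerate $E$; that picture is available only when $E$ is a point. Your sketch therefore does not explain how to defeat your own (correct) packing obstruction $\mathcal{H}^3(N_\delta(E))\gtrsim d\,\delta^2$, nor how a self-similar gluing of model blocks would end up homeomorphic to $\mathbb{R}^3$ with $E$ a nondegenerate continuum inside it.

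The paper resolves the tension by a different mechanism, which is the actual content of the theorem. It takes $X=Y\times(-2,2)$ with $Y$ a surface obtained by doubling the cusp region $\{0\le y\le t^3/3\}$, and $E=\{z\}\times[-1,1]$ with $z$ the cusp tip. The separating shells around $E$ are not fat; instead, the portion of each shell that an escaping curve can actually cross is made to have anomalously small volume. A curve leaving $E$ sideways must traverse the cusp sheet, whose $\delta$-neighborhood of $E$ has $\mathcal{H}^3$-measure $\lesssim\delta^4$ rather than $\delta^2$, so the weight $\delta^{-1}$ there has energy $\lesssim\delta^4/\delta^3=\delta\to 0$; a curve escaping vertically is caught by a weight $\varepsilon^{-1}$ on a region of measure $\lesssim\delta^2\varepsilon$. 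The volume that Ahlfors $3$-regularity forces into $N_\delta(E)$ is parked in doubled squares (``pillowcases'') glued to $Y$ along slits: these restore the measure lower bound but are dead ends through which no curve can escape, so they never enter the admissibility check. In short, the missing idea is not to fatten the rings but to make the \emph{crossable} part of each ring thin in measure while hiding the regularity-mandated volume in appendages invisible to $\Gamma(E,F)$; without some such device, your outline cannot be completed.
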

	
	As a consequence, the metric space $X$ in \Cref{thm:example} is not quasiconformally equivalent to $\mathbb{R}^3$. Moreover, $X$ does not admit even a weakly quasiconformal parametrization by $\mathbb{R}^3$ in the sense of \cite{NR:22}. The question of whether a metric space as in \Cref{thm:example} exists was raised by the author and Ntalampekos in Remark 7.3 of \cite{NR:21}.  
	
	If one ignores the requirement for $X$ to be Ahlfors $3$-regular, then the example used to prove \Cref{thm:example} is simple to describe. Let $Y$ be a surface of diameter roughly equal to $1$ containing a narrow cusp at a point $x$. Take $X = Y \times (-2,2)$, equipped with the Euclidean product metric. Then $E = \{x\} \times [-1,1]$ is a continuum satisfying the theorem. The space $X$ used to prove \Cref{thm:example} modifies this construction by adding additional area to $Y$ to ensure that $X$ is Ahflors $3$-regular. The construction can be realized as a hypersurface in $\mathbb{R}^4$ that is smooth except on the set $E$, although for simplicity we give an intrinsic geometric construction. 
	
	To put our example in perspective, we recall in a slightly simplified form the following standard definition in the field of analysis on metric spaces; see Definition 3.1 in \cite{HK:98}.
	Let $X$ be a metric space of real Hausdorff dimension $n>1$, which we consider as a metric measure space by equipping it with $n$-dimensional Hausdorff measure. We say that $X$ is \textit{Loewner} if there is a function $\varphi \colon (0,\infty) \to (0, \infty)$ such that $\Mod_n \Gamma(E,F) \geq \varphi(\Delta(E,F))$ for all disjoint, nondegenerate continua $E,F \subset X$, where $\Delta(E,F)$ is the relative distance \[\Delta(E,F) = \frac{\dist(E,F)}{\min\{\diam(E), \diam(F) \}}.\]
	The Loewner property guarantees a large supply of rectifiable curves in a quantitative way. Euclidean space $\mathbb{R}^n$ for all $n \geq 2$, as well as sufficiently nice domains in $\mathbb{R}^n$, are Loewner. Much of the theory of analysis on metric spaces depends on the Loewner property, often together with Ahlfors $n$-regularity for the same value $n$. For example, a theory of quasiconformal mappings very similar to the Euclidean theory can be developed in such spaces \cite{HKST:01}. The Loewner property is essentially equivalent to the statement that $X$ satisfies a $(1,n)$-Poincar\'e inequality as defined in \cite{HK:98}; see Corollary 5.13 in \cite{HK:98}. It is also related to the twin properties of linear local contractibilility and linear local connectivity, which we review in \Cref{sec:background}.
	It is natural to ask to what extent the Loewner property can be relaxed while retaining some reasonably good geometric behavior. However, in light of \Cref{thm:example}, we are not aware of a natural candidate geometric assumption weaker than the Loewner property that could be sufficient to guarantee positive capacity of continua. 
	
	There are a number of related constructions in the literature. Ahlfors $3$-regular metric spaces $X$ homeomorphic to $\mathbb{R}^3$ that are not quasiconformally equivalent to $\mathbb{R}^3$, with the additional property of being a Loewner space, were constructed by Semmes in \cite{Sem:96a}. These examples are metric versions of various classical constructions in geometric topology. Semmes's work has been extended, for instance, in \cite{HW:10,PV:17,PW:14}. Together, the various examples show how the uniformization problem in dimensions three and greater is not expected to have a simple resolution.  
	
	\subsection*{Acknowledgment} I thank Toni Ikonen, Dimitrios Ntalampekos and Kai Rajala for discussions and feedback related to this paper.
	
	\smallskip
	
	\section{Background} \label{sec:background}
	
	In this section, we review the main notation and definitions and briefly discuss the related literature. Let $(X,d)$ be a metric space. We make $X$ into a metric measure space by equipping it with the Hausdorff $n$-measure for suitable choice of real number $n \geq 1$, denoted by $\mathcal{H}^n$. The metric space $X$ is \textit{Ahlfors $n$-regular} if there is a constant $C\geq 1$ such that the Hausdorff $n$-measure satisfies
	\[\frac{r^n}{C} \leq \mathcal{H}^n(B(x,r)) \leq Cr^n \]
	for all $x \in X$ and $r \in (0,\diam(X))$. Here, $B(x,r)$ denotes the open ball of radius $r$ centered at $x$.
	
	Given a family $\Gamma$ of curves in $X$, a Borel function $\rho\colon  X \to [0, \infty]$ is \textit{admissible} for $\Gamma$ if
	\[\int_\gamma \rho\,ds \geq 1 \]
    for all locally rectifiable curves $\gamma \in \Gamma$. For all $p \geq 1$, the \textit{$p$-modulus} of $\Gamma$ is defined as
    \[\Mod_p \Gamma = \inf \int_X \rho^p\,d\mathcal{H}^n, \]
    where the infimum is taken over all functions $\rho$ that are admissible for $\Gamma$. If $X$ is a metric $n$-manifold with locally finite Hausdorff $n$-measure, then the $n$-modulus of $\Gamma$ is called the \textit{conformal modulus} of $\Gamma$. 
    
    The definition of modulus can be extended to objects other than curves. Very commonly, we are interested in families of codimension-$1$ hypersurfaces separating two disjoint continua. Following \cite{EBPC:21}, for two disjoint continua $E,F \subset X$, let $\Sigma(E,F)$ denote the set of boundaries of open sets $U$ such that $E \subset U$ and $F \subset X \setminus \overline{U}$. A Borel function $\rho\colon X \to [0, \infty]$ is \textit{admissible} for $\Sigma(E,F)$ if 
    \[\int_{\partial U} \rho\, d\mathcal{H}^{n-1} \geq 1\]
    for any such open set $U$ whenever $\partial U$ has finite Hausdorff $(n-1)$-measure. The $p$-modulus of $\Sigma(E,F)$, denoted by $\Mod_p \Sigma(E,F)$, is then defined in the same way as for a curve family. 
    
    The following duality result is due to Eriksson-Bique and Poggi-Corradini \cite{EBPC:21}.
	
	\begin{thm} \label{thm:ebpc}
	    Let $X$ be a compact metric space of finite Hausdorff $n$-measure for some $n \geq 1$, and let $E,F \subset X$ be disjoint continua. Let $p \in (1, \infty)$ and $q = p/(p-1)$. If $\Mod_p \Gamma(E,F) >0$, then
	    \begin{equation} \label{equ:modulus}
	      \left(\Mod_p \Gamma(E,F)\right)^{1/p} \, \left(\Mod_q \Sigma(E,F)\right)^{1/q} \geq 2v_n/v_{n-1},  
	    \end{equation}
	    where $v_k = \pi^{k/2}/\Gamma(k/2+1)$ and $\Gamma(\cdot)$ is the Gamma function. 
	    
	    If $\Mod_p \Gamma(E,F) =0$, then $\Mod_q \Sigma(E,F) = \infty$. 
	\end{thm}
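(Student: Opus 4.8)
The plan is to deduce \eqref{equ:modulus} from a pointwise pairing between an admissible function for the connecting family and one for the separating family, via a potential function, a coarea inequality, and Hölder's inequality. Fix a Borel function $\rho$ admissible for $\Gamma(E,F)$ and a Borel function $\sigma$ admissible for $\Sigma(E,F)$, both of finite energy. Since the admissible classes vary independently and the energies are nonnegative,
\[
\left(\Mod_p\Gamma(E,F)\right)^{1/p}\left(\Mod_q\Sigma(E,F)\right)^{1/q}=\inf_{\rho}\left(\int_X\rho^p\,d\mathcal{H}^n\right)^{1/p}\cdot\inf_{\sigma}\left(\int_X\sigma^q\,d\mathcal{H}^n\right)^{1/q},
\]
so it suffices to bound the product of the $L^p$- and $L^q$-energies below by $2v_n/v_{n-1}$ for every such pair and then take infima.

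First I would build the $\rho$-potential $u\colon X\to[0,1]$ by setting $u(x)=\min\{1,\,d_\rho(x,E)\}$, where $d_\rho(\cdot,E)$ is the length distance to $E$ measured with the Borel weight $\rho$. Because $\rho$ is admissible for $\Gamma(E,F)$, every curve from $E$ to $F$ has $\rho$-length at least $1$; hence $u\equiv0$ on $E$, $u\equiv1$ on $F$, and $\rho$ is an upper gradient of $u$. For each $t\in(0,1)$ the open set $U_t=\{u<t\}$ satisfies $E\subset U_t$ and $F\subset X\setminus\overline{U_t}$, so $\partial U_t\subset\{u=t\}$ lies in $\Sigma(E,F)$. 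Consequently, for a.e. $t$ with $\mathcal{H}^{n-1}(\{u=t\})<\infty$, admissibility of $\sigma$ together with $\sigma\geq0$ yields $\int_{\{u=t\}}\sigma\,d\mathcal{H}^{n-1}\geq\int_{\partial U_t}\sigma\,d\mathcal{H}^{n-1}\geq1$.

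The second step is to integrate over $t$ and convert the level-set integrals into a bulk integral using the coarea (Eilenberg) inequality, which holds for Lipschitz functions on an arbitrary metric space and requires no rectifiability of $X$. In its sharp form this gives
\[
1\ \leq\ \int_0^1\left(\int_{\{u=t\}}\sigma\,d\mathcal{H}^{n-1}\right)dt\ \leq\ \frac{v_{n-1}}{2v_n}\int_X\rho\,\sigma\,d\mathcal{H}^n,
\]
so that $\int_X\rho\sigma\,d\mathcal{H}^n\geq 2v_n/v_{n-1}$, and Hölder's inequality with exponents $p$ and $q$ then produces the desired per-pair bound on the product of energies. Taking infima establishes \eqref{equ:modulus}. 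I expect the main obstacle in this part to be technical rather than conceptual: since $\rho$ is merely Borel, the potential $u$ need not be Lipschitz and the pointwise Lipschitz constant of $u$ need not be dominated by $\rho$, so one must regularize $\rho$ by continuous majorants (with energies converging to those of $\rho$) before applying the coarea inequality. Propagating the \emph{sharp} constant $v_{n-1}/(2v_n)$ through this approximation is the delicate point on which the precise value $2v_n/v_{n-1}$ depends.

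Finally, for the case $\Mod_p\Gamma(E,F)=0$ the inequality \eqref{equ:modulus} is vacuous, and I would argue separately that $\Mod_q\Sigma(E,F)=\infty$. Equivalently, by contraposition, one must show that $\Mod_q\Sigma(E,F)<\infty$ forces $\Mod_p\Gamma(E,F)>0$; that is, given a separating function $\sigma$ of finite $q$-energy one must manufacture a connecting weight $\rho$ admissible for $\Gamma(E,F)$ of positive, controlled $p$-energy. This converse direction does not follow from the coarea estimate and is the genuinely hard part of the theorem. The natural route is a continuum max-flow/min-cut duality: a finite-energy separating function is dual to a bounded connecting flow, from which a nontrivial admissible weight for the connecting family is extracted. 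I expect this duality, rather than the coarea step, to be the crux, precisely because it must be carried out in a rough metric space where no rectifiable structure is available a priori.
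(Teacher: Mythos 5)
This theorem is not proved in the paper at all: it is imported from \cite{EBPC:21} with a citation, so there is no in-paper proof to compare against. Measured against the actual argument in that reference, the first half of your outline has the right architecture: one shows that for every admissible pair $(\rho,\sigma)$ the pairing bound $\int_X\rho\sigma\,d\mathcal{H}^n\geq 2v_n/v_{n-1}$ holds, by forming the $\rho$-potential $u$, observing that its sublevel sets produce boundaries in $\Sigma(E,F)$, integrating over levels via a coarea estimate, and finishing with H\"older and the independence of the two infima. Two caveats. The technical issues you flag (Borel $\rho$ versus upper gradients, openness of $\{u<t\}$) are real but standard: one restricts to lower semicontinuous admissible $\rho$ by Vitali--Carath\'eodory, which does not change $\Mod_p$. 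More seriously, the ``sharp form'' of the coarea inequality with constant $v_{n-1}/(2v_n)$ that you invoke is not the classical Eilenberg inequality (whose constant is worse by a factor of $4$); obtaining that sharp constant is the main technical contribution of \cite{EBPC:21}, so your outline black-boxes exactly the ingredient on which the stated value $2v_n/v_{n-1}$ depends.

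The genuine gap is in your final paragraph. The degenerate case $\Mod_p\Gamma(E,F)=0\Rightarrow\Mod_q\Sigma(E,F)=\infty$ does not require a converse max-flow/min-cut duality, and that is the wrong route: the converse direction corresponds to an \emph{upper} bound on the product of moduli, which is a separate and much harder statement that is not asserted here. The degenerate case follows from the very same per-pair inequality you already established. Indeed, suppose some admissible $\sigma$ for $\Sigma(E,F)$ had $\bigl(\int_X\sigma^q\,d\mathcal{H}^n\bigr)^{1/q}=M<\infty$. Since $\Mod_p\Gamma(E,F)=0$, choose admissible $\rho_k$ with $\bigl(\int_X\rho_k^p\,d\mathcal{H}^n\bigr)^{1/p}\to 0$; then
\[
\frac{2v_n}{v_{n-1}}\ \leq\ \int_X\rho_k\sigma\,d\mathcal{H}^n\ \leq\ M\left(\int_X\rho_k^p\,d\mathcal{H}^n\right)^{1/p}\ \longrightarrow\ 0,
\]
a contradiction; hence every admissible $\sigma$ has infinite $q$-energy. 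As written, your proposal leaves this half of the theorem unproven and points toward machinery that is not needed for it.
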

	
	Of particular note is the case where $n$ is an integer and $p=n$, so that the conclusion in \Cref{thm:ebpc} concerns conformal modulus. \Cref{thm:ebpc} leaves open the possibility that the conformal modulus is zero and hence that \eqref{equ:modulus} holds only in the degenerate sense. Our example in \Cref{thm:example} thus provides a situation where this can happen. See also \cite{JL:20,Loh:21,LR:21} for further results on duality of modulus in metric spaces.
	
	Such an example is not possible in the two-dimensional setting. Let $X$ and $Z$ be metric $n$-manifolds  with locally finite Hausdorff $n$-measure. We say that a map $h\colon Z \to X$ is \textit{weakly $K$-quasiconformal} if it is continuous and surjective, is the uniform limit of homeomorphisms and satisfies the modulus inequality
	\begin{equation} \label{equ:weak_qc}
	  \Mod_n h(\Gamma) \geq K \Mod_n \Gamma  
	\end{equation}
	for all curve families $\Gamma$ in $Z$. The property of being the uniform limit of homeomorphisms is a natural weakening of being a homeomorphism and is closely related to the property of being a \textit{cell-like mapping}; see Siebenmann \cite{Sie:72} for a precise statement. 
	It is shown by the author and Ntalampekos in \cite{NR:22} that, assuming only that $X$ is a surface with locally finite Hausdorff $2$-measure, such a weakly quasiconformal parametrization from a smooth Riemannian surface always exists. The following theorem builds on a line of increasingly general uniformization theorems for metric surfaces \cite{BK:02,Raj:17,LW:20,MW:21,NR:21}.

	\begin{thm} \label{thm:weak_qc}
	Let $X$ be a metric surface of locally finite Hausdorff $2$-measure. Then there exists a smooth Riemannian surface $Z$ of constant curvature and a weakly quasiconformal map $h \colon Z \to X$ satisfying the inequality
	\[\Mod_2 h(\Gamma) \geq \frac{\pi}{4} \Mod_2 \Gamma \]
	for all curve families $\Gamma$ in $Z$. 
	\end{thm}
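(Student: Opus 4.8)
The plan is to obtain $h$ as an energy-minimizing parametrization, following the variational approach to uniformization of metric disks developed by Lytchak--Wenger \cite{LW:20}. First I would fix the topology: since $X$ is a $2$-manifold, the classification of surfaces singles out a smooth surface in the correct homeomorphism class, and the uniformization theorem for Riemann surfaces supplies a conformal metric of constant curvature on each complex structure. The complex structure is itself part of the unknown, so I would let it vary together with the map, and reduce the noncompact case (and the case with boundary) to the compact case by exhausting $X$ with compact subsurfaces, solving on each piece, and extracting a limit; local finiteness of $\mathcal{H}^2$ is precisely what makes this exhaustion converge. After this reduction the task is to parametrize a fixed compact model.

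Next I would set up the minimization in the Korevaar--Schoen / Newtonian--Sobolev framework for maps valued in the metric space $X$. Over the class of Sobolev maps $u \colon Z \to X$ that are homotopic to a homeomorphism (equivalently, that carry the correct degree), normalized to prevent degeneration by a three-point condition in the spherical case or by suitable boundary data otherwise, I would minimize the Reshetnyak energy $E_+^2$, minimizing simultaneously over the conformal structure on $Z$. Existence of a minimizer $h$ follows from lower semicontinuity of the energy together with a Rellich-type compactness theorem for metric-space-valued Sobolev maps, where properness of $X$ and the local finiteness of $\mathcal{H}^2$ furnish the compactness and prevent energy or area from escaping. Uniformizing the minimizing complex structure then produces the constant-curvature metric on $Z$.

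The modulus inequality is a pull-back computation. Given $\sigma$ admissible for $h(\Gamma)$, the function $\rho = (\sigma \circ h)\, g_h$, with $g_h$ the minimal upper gradient of $h$, is admissible for $\Gamma$, since $g_h$ bounds the metric speed of $h \circ \gamma$. Hence
\[ \Mod_2 \Gamma \leq \int_Z (\sigma \circ h)^2\, g_h^2 \, d\mathcal{H}^2, \]
while the area formula gives $\int_X \sigma^2\, d\mathcal{H}^2 = \int_Z (\sigma \circ h)^2\, J_h\, d\mathcal{H}^2$ for the degree-one, non-overlapping minimizer, where $J_h$ is the Jacobian defined through Hausdorff measure. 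Everything therefore reduces to the pointwise inequality $J_h \geq \tfrac{\pi}{4}\, g_h^2$ almost everywhere, which is exactly the infinitesimal conformality of the energy minimizer: its approximate metric derivative is as round as the infinitesimal geometry of $X$ permits, and the extremal ratio $\pi/4$ reflects the sharp discrepancy between Hausdorff (Busemann) area and Riemannian energy density on a normed plane. Combining the two displays yields $\Mod_2 h(\Gamma) \geq \tfrac{\pi}{4} \Mod_2 \Gamma$.

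The main obstacle is showing that the minimizer $h$ is genuinely weakly quasiconformal, that is, continuous and a uniform limit of homeomorphisms. Continuity and the quasiconformal-type estimates should follow from interior regularity for energy minimizers, but monotonicity is delicate: without a reciprocity hypothesis on $X$ the minimizer may legitimately collapse nondegenerate sets, so one must show that its point-preimages are nevertheless connected (cell-like) and that no folding or overlapping occurs, after which a classical theorem on monotone maps between surfaces of Youngs--Moore type upgrades $h$ to a uniform limit of homeomorphisms. Controlling this collapsing while preserving the \emph{sharp} constant $\pi/4$, rather than merely a qualitative modulus bound, is the crux of the argument and the part demanding the most care.
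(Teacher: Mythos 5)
First, a point of comparison: the paper does not prove this theorem at all --- it is quoted as background, with the proof residing in \cite{NR:22} --- so there is no in-paper argument to match yours against. Measured against the actual proof in \cite{NR:22}, your proposal takes a genuinely different route, and it has a gap exactly where the full strength of the hypothesis matters. You propose the direct variational method: minimize Reshetnyak energy over Sobolev maps of the correct degree and over conformal structures, then extract infinitesimal quasiconformality of the minimizer. That machinery, in the forms developed in \cite{LW:20,Raj:17,MW:21,NR:21}, requires standing assumptions on $X$ --- properness, geodesicity and a quadratic isoperimetric inequality, or reciprocity, or an upper mass bound --- none of which follow from locally finite $\mathcal{H}^2$ alone. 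Your compactness step explicitly invokes ``properness of $X$,'' but $X$ is not assumed proper, complete, or geodesic. More fundamentally, without such hypotheses it is not clear that the admissible class is nonempty: a general metric surface with locally finite $\mathcal{H}^2$ may have very few rectifiable curves, so the class of Sobolev maps $Z \to X$ homotopic to a homeomorphism could a priori be empty and the infimum of energy meaningless. Removing these auxiliary hypotheses is precisely the content of \cite{NR:22}, and it is achieved by a different mechanism: $X$ is approximated from within by piecewise-flat polyhedral surfaces, each uniformized classically, with uniform modulus estimates for the approximating maps and a limiting argument producing $h$; the uniform-limit-of-homeomorphisms property comes from the approximation scheme rather than from a monotonicity analysis of an energy minimizer.

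Your closing paragraph correctly flags monotonicity of the minimizer as delicate, but the decisive gap is earlier: the existence theory you lean on is not available, and the direct method does not obviously run, under the sole assumption of locally finite Hausdorff $2$-measure. If you strengthen the hypotheses to those of \cite{LW:20} or \cite{Raj:17,MW:21,NR:21}, your outline is a fair sketch of the historical proofs of the weaker versions of this theorem, including the source of the constant $\pi/4$ in the pointwise comparison of the Jacobian with the squared minimal upper gradient. As written, however, it does not prove the stated theorem in its full generality.
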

	Let $X,Y$ be metric $n$-manifolds with locally finite Hausdorff $n$-measure. A homeomorphism $f \colon X \to Y$ is \textit{quasiconformal} is there exists $K \geq 1$ such that 
	\[K^{-1} \Mod_n \Gamma \leq \Mod_n h(\Gamma) \leq K \Mod_n \Gamma\] for every curve family $\Gamma$ in $X$. If the space $X$ in \Cref{thm:weak_qc} satisfies a geometric condition called \textit{reciprocity} (see Definition 1.3 in \cite{Raj:17}), then the map $h$ given in this theorem is a quasiconformal homeomorphism. Indeed, $h$ satisfies the two-sided modulus inequality
	\[\frac{\pi}{4} \Mod_2 \Gamma \leq \Mod_2 h(\Gamma) \leq 2 \Mod_2 \Gamma \]
	for all curve families $\Gamma$ in $X$. In summary, the only way that a surface may fail to be quasiconformal equivalent to a smooth Riemannian surface is if it has ``too many'' rectifiable curves, which occurs if the surface is too squeezed around a set of small Hausdorff measure.
	
	Finally, the metric space $X$ is \textit{linearly locally connected} if there exists $\lambda \geq 1$ with the following two properties:
	\begin{enumerate}[label=(\alph*)]
	    \item \label{item:llca} For all $x \in X$ and $r>0$, any two points $y,z$ in $B(x,r)$ can be connected by a continuum in $B(x,\lambda R)$. 
	    \item \label{item:llcb} For all $x \in X$ and $r>0$, and two points $y,z$ in $X \setminus B(x,r)$ can be connected by a continuum in $B(x,R/\lambda)$. 
	\end{enumerate} 
	The space $X$ is \textit{linearly locally contractible} if there exists $\lambda \geq 1$ such that for all $x \in X$, $r \in (0,\diam(X)/\lambda)$, the ball $B(x,r)$ can be contracted to a point inside the side $B(x,\lambda r)$. Let $X$ be an Ahlfors $n$-regular metric $n$-manifold for some $n \geq 2$. Under mild assumptions (for example, if $X$ is closed and orientable), we have the following relationships. First, by a result of Semmes \cite[Theorem B.10]{Sem:96}, if $X$ is linearly locally contractible, then $X$ is Loewner. See also \cite[Theorem 6.11]{HK:98}. Second, if $X$ is Loewner, then $X$ is linearly locally connected \cite[Theorem 3.13]{HK:98}. If $n=2$, then all these properties are equivalent; see Lemma 2.5 in \cite{BK:02}. 
	
	
	\smallskip
	
	\section{The construction}
	
	We now continue with the construction. The space $X$ used to prove \Cref{thm:example} has the form $X = Y \times (-2,2)$, where $Y$ is an Ahlfors $2$-regular surface. The idea is for $Y$ to have a very narrow cusp point, while adding in more area to guarantee Ahlfors $2$-regularity. 
	
	Define the function $f \colon [0,1) \to [0, \infty)$ by $f(t) = t^3/3$. The coefficient in the definition of $f$ is included to guarantee that $f'(t) \leq 1$ for all $t$. 
	Let \[Y_0 = \{(t,y) \in \mathbb{R}^2: 0\leq t < 1, 0 \leq y \leq f(t)\}.\] 
	Thus $Y_0$ is a region in $\mathbb{R}^2$ homeomorphic to a closed half-plane. Let $Y_1$ be the double of $Y_0$. That is, $Y_1$ is the quotient space of two copies of $Y_0$ obtained by gluing the manifold boundary $\{(t,y) \in Y_0: y=0 \text{ or } y = f(t)\}$ of each copy of $Y_1$. Observe that $Y_1$ is a surface homeomorphic to $\mathbb{R}^2$. We identify $Y_0$ with the one of its copies in $Y_1$, which we think of as the top side of $Y_1$. 
	
	We obtain the surface $Y$ by gluing additional pieces into $Y_0$. See \Cref{fig:construction} for an illustration. Let $t_{i}^m= i/2^m$, where $m \geq 1$ and $1 \leq i < 2^m$ and $i,m$ are relatively prime, so that the collection $\{t_i^m\}$ is an enumeration of the dyadic rationals in $(0,1)$. For each dyadic rational $t_i^m$, let $I_i^m$ be the interval $\{t_i^m\} \times [0,\min\{2^{-m},f(t_i^m)\}]$, considered as a subset of $Y_0$. Let $S_i^m$ be the double of the square $[0,2^{-m}]^2$, which we call a \textit{pillowcase}, and let $J_i^m$ be a slit of length $\min\{2^{-m},f(t_i^m)\}$ on the boundary edge $[0,2^{-m}] \times \{0\}$, considered as a subset of $S_i^m$. We glue each pillowcase $S_i^m$ to $Y_0$ along the slits $I_i^m$ and $J_i^m$ to form a surface $Y$. More precisely, associated to each slit $I_i^m$ and $J_i^m$ is a set of prime ends homeomorphic to the circle, denoted respectively by $\widehat{I}_i^m$ and $\widehat{J}_i^m$. Identify an endpoint of $I_i^m$ with one of the endpoints of $J_i^m$, and associate to each point in $I_i^m$ the point in $J_i^m$ at the same distance from the chosen endpoint. This induces a continuous bijection $\varphi_i^m$ from $\widehat{I}_i^m$ to $\widehat{J}_i^m$. Let $\widehat{Y}_0$ be the space $Y_0$ with each slit $I_i^m$ replaced by $\widehat{I}_i^m$, and $\widehat{S}_i^m$ be the space $S_i^m$ with $J_i^m$ replaced by $\widehat{J}_i^m$. Then we glue the sets $\widehat{S}_i^m$ to $\widehat{Y}_0$ along the mapping $\varphi_i^m$ to form the space $Y$. One checks that $Y$ is a topological surface. 
	
	\begin{figure}
	\centering
	\resizebox{3.75in}{1.25in}{
    \begin{tikzpicture}[scale=4]
    \draw[thick] (0,0) to (1,0);
    \draw[thick,smooth,samples=100,domain=0:1]  plot(\x,{(\x)*(\x)*(\x)/3});
    \draw[densely dashed] (1,0) to (1,.3333);
    \draw (.5,0) to (.5,.042);
    \draw (.25,0) to (.25,.005);
    \draw (.75,0) to (.75,.141);
    \draw (.125,0) to (.125,.001953);
    \draw (.375,0) to (.375,.018);
    \draw (.625,0) to (.625,.081);
    \draw (.875,0) to (.875,.125);
    \draw (3*.0625,0) to (3*.0625,.002);
    \draw (5*.0625,0) to (5*.0625,.01);
    \draw (7*.0625,0) to (7*.0625,.0279);
    \draw (9*.0625,0) to (9*.0625,.059);
    \draw (11*.0625,0) to (11*.0625,.0625);
    \draw (13*.0625,0) to (13*.0625,.0625);
    \draw (15*.0625,0) to (15*.0625,.0625);
    \draw (7*.03125,0) to (7*.03125,.00333);
    \draw (9*.03125,0) to (9*.03125,.007);
    \draw (11*.03125,0) to (11*.03125,.0135);
    \draw (13*.03125,0) to (13*.03125,.022);
    \draw (15*.03125,0) to (15*.03125,.03125);
    \draw (17*.03125,0) to (17*.03125,.03125);
    \draw (19*.03125,0) to (19*.03125,.03125);
    \draw (21*.03125,0) to (21*.03125,.03125);
    \draw (23*.03125,0) to (23*.03125,.03125);
    \draw (25*.03125,0) to (25*.03125,.03125);
    \draw (27*.03125,0) to (27*.03125,.03125);
    \draw (29*.03125,0) to (29*.03125,.03125);
    \draw (31*.03125,0) to (31*.03125,.03125);
    \end{tikzpicture}}
    \caption{The surface $Y_0$. At each dyadic rational $i/2^m$ ($i,m$ relatively prime), a doubled square (or \textit{pillowcase}) of side length $2^{-m}$ is attached along the corresponding slit.}
    \label{fig:construction}
\end{figure}
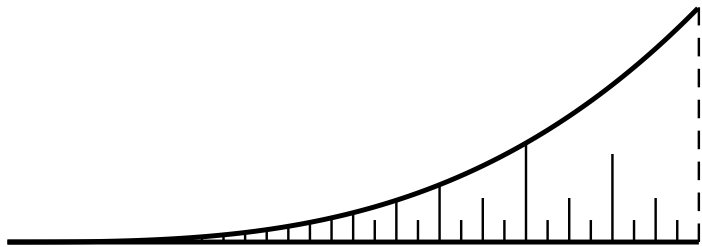
	
	The spaces $\widehat{Y}_0$ and $\widehat{S}_i^m$ each come equipped with a length metric induced by the Euclidean metric on the respective sets. We equip $Y$ with the length metric induced by the gluing, which we denote by $d_Y$. We use $B_Y(x,r)$ to denote the open metric ball in $Y$ centered at $x$ of radius $r$. We then define $X = Y \times (-2,2)$ and equip it with the Euclidean product metric, denoted by $d$. That is, if $z=(z_1,z_2)$ and $w = (w_1,w_2)$ are points in $X$, then $d(z,w) = \sqrt{d_Y(z_1,w_1)^2 + |z_2-w_2|^2}$. Observe that $d$ is also a length metric. Our particular choice of the metric $d$ guarantees that the Hausdorff $3$-measure on $X$ is as nice as possible. Since $Y$ is locally isometric to the Euclidean plane outside a set of Hausdorff $2$-measure zero, we see that $X$ is locally isometric to $\mathbb{R}^3$ outside a set of Hausdorff $3$-measure zero. Thus, for a product set $A = B \times I \subset X$, we have the product relation $\mathcal{H}^3(A) = \mathcal{H}^2(B) \mathcal{H}^1(I)$.

	In the following lemma, we write a point $x \in \widehat{Y}_0$ in coordinates as $x = (x_1,x_2)$, identifying $\widehat{Y}_0$ with the set $Y_0 \subset \mathbb{R}^2$. There is a mild ambiguity in this notation, since the doubled slit points in $\widehat{Y}_0$ share the same coordinate representation, but this should create no confusion.   
	To show that $X$ is Ahlfors $3$-regular, the following lemma is sufficient.
	
	
	\begin{lemm}
	 The surface $Y$ is Ahlfors $2$-regular.
	\end{lemm}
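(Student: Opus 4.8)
The plan is to verify the two Ahlfors bounds $\mathcal{H}^2(B_Y(x,r)) \le C r^2$ and $\mathcal{H}^2(B_Y(x,r)) \ge r^2/C$ by exploiting that $Y$ is assembled from flat pieces glued along $1$-dimensional slits, so that $\mathcal{H}^2$ is additive across the gluings and it suffices to control the base contribution together with $\sum_{i,m}\mathcal{H}^2(B_Y(x,r)\cap S_i^m)$, each $S_i^m$ being a flat doubled square of area $2\cdot 4^{-m}$. Two preliminary reductions would come first. Since $Y_1$ is locally isometric to $\mathbb{R}^2$ away from the single cusp point $x_0=(0,0)$, it contributes at most $Cr^2$ to any ball, and only $O(r^4)$ to a ball at the cusp, where the vanishing width forces $\mathcal{H}^2(B_Y(x_0,r)\cap Y_1)\le \int_0^{r}2f(t)\,dt\asymp r^4$. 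Moreover the projection $\pi\colon Y\to[0,1)$ sending a base point $(t,y)$ to $t$ and all of each $S_i^m$ to $t_i^m$ is $1$-Lipschitz, so pillowcases create no shortcut in the $t$-direction and $d_Y(p,q)\ge|\pi(p)-\pi(q)|$.

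For the upper bound I would split the pillowcases by scale; via $\pi$ we may take $x$ on the base, a center inside some $S_{i_0}^{m_0}$ contributing only that pillowcase's own $O(r^2)$ in addition. For the small scales $2^{-m}\le r$ the gluing points $t_i^m$ are $2^{-m+1}$-separated, so at most $O(r2^m)$ lie within horizontal distance $r$ of $x$; bounding each contribution by the full area $2\cdot 4^{-m}$ gives $\sum_{2^{-m}\le r}O(r2^m)\cdot 4^{-m}=O\!\big(r\sum_{2^{-m}\le r}2^{-m}\big)=O(r^2)$. For the large scales $2^{-m}>r$, a pillowcase meets the ball only if its gluing point lies within $r$ of $x_1$, and the key number-theoretic fact is that at most two dyadic rationals in lowest terms with denominator exceeding $1/r$ lie within $r$ of any given point: one from the side where the point sits just above such a rational (which pins the position of the last $1$-bit among the first $M\asymp\log_2(1/r)$ binary digits of $x_1$) and one from just below (which pins a maximal run of trailing $1$-bits ending at position $M$). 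Hence only boundedly many large pillowcases meet the ball, each contributing the area of an $r$-ball in a flat surface, $O(r^2)$, and no logarithmic loss occurs.

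The lower bound is where the tuning of the construction is essential and is the main obstacle, since at the cusp the base supplies only $O(r^4)$. I would treat a ball $B_Y(x_0,r)$ and observe that every pillowcase with $t_i^m<r/2$ and $2^{-m}<r/(2\sqrt2)$ lies entirely inside it, its gluing point being within $r/2$ of $x_0$ and its intrinsic diameter at most $\sqrt2\,2^{-m}<r/2$. Crucially, the short slits near the cusp (of length $f(t_i^m)\ll 2^{-m}$) do not obstruct access, because crossing a slit costs no length, so from $x_0$ every point of $S_i^m$ is reached at cost $t_i^m$ plus its distance inside the flat pillowcase. Writing $2^{-M}\asymp r$ and counting admissible gluing points, $\#\{i:\,i/2^m<r/2,\ i\text{ odd}\}\asymp r2^m$ for $m\ge M+O(1)$, whence
\[
\mathcal{H}^2(B_Y(x_0,r))\;\ge\;\sum_{m\ge M+O(1)}\#\{i:\,i/2^m<r/2,\ i\text{ odd}\}\cdot 2\cdot 4^{-m}\;\asymp\;\sum_{m\ge M}r\,2^m\cdot 4^{-m}\;=\;r\!\!\sum_{m\ge M}\!2^{-m}\;\asymp\;r\cdot 2^{-M}\;\asymp\;r^2.
\]

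Finally I would record that the slit gluings produce only finitely many cone points per pillowcase, all of angle bounded below: the smallest are the $\pi$-angle corners of the doubled squares, while the slit tips have angle $3\pi$ or $4\pi$. Consequently, away from the cusp no ball is pinched and the lower bound there follows from the flat local structure, while the matching estimates at $x_0$ above treat the degenerate case and complete the proof.
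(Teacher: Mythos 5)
Your two core estimates are sound, and your lower bound at the cusp is a genuinely different (and valid) route from the paper's: you sum the areas of \emph{all} pillowcases of scale $2^{-m}\lesssim r$ contained in $B_Y(x_0,r)$, obtaining $\sum_{m\ge M} r2^m\cdot 4^{-m}\asymp r\cdot 2^{-M}\asymp r^2$, whereas the paper locates a \emph{single} pillowcase of side length comparable to $r$ within distance $r/2$ of the center and uses its area alone. Your upper bound is essentially the paper's (geometric summation over the small scales; boundedly many large pillowcases because all dyadics of denominator at most $2^M\asymp 1/r$ lie on a grid of spacing $2^{-M}>r$ --- note that your phrase ``denominator exceeding $1/r$'' should read ``denominator at most $1/r$'').

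The genuine gap is your final reduction. The claim that away from $x_0$ ``no ball is pinched and the lower bound follows from the flat local structure'' is false: the pinching is not a feature of the single point $x_0$ but of the entire thin region near the bottom edge. For $x=(x_1,x_2)$ with $x_2$ small and $f(x_1)=x_1^3/3\ll r$, the base surface $Y_1$ contributes only about $4rf(x_1)\ll r^2$ to $B_Y(x,r)$ (for instance $x_1=0.1$, $r=0.05$ gives roughly $r^2/30$), and the cone-angle observation is irrelevant at these scales. So the pillowcase count is genuinely needed for all such centers, not only for $x_0$. Fortunately your own estimate extends verbatim: for any $x_1$ the number of odd $i$ with $|i/2^m-x_1|<r/2$ is still $\asymp r2^m$ for $m\ge M+O(1)$, and $S_i^m\subset B_Y(x,r)$ whenever $x_2+|i/2^m-x_1|+C2^{-m}<r$; this is how the degenerate case should be closed. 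Separately, for centers with $x_2\gtrsim r$ in the interior of $Y_0$ one must still verify that a definite fraction of the Euclidean $r$-disk around $x$ lies inside $Y_0$ and is not severed by slits --- this is exactly where the paper uses $f'\le 1$ and the fact that slits reaching height $2^{-n}$ occur only at dyadics of level at most $n$ (its eight-triangle argument); an appeal to ``flat away from cone points'' does not supply this.
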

	\begin{proof}
	Let $x \in Y$ and $r \in (0, \diam(Y))$, noting that $\diam(Y) < 2$. Assume first that $x$ belongs to $\widehat{Y}_0$ or one of the pillowcases $S_i^m$. First we establish the lower bound on $\mathcal{H}^2(B_Y(x,r))$. We split into cases.
	
	\begin{case1}
	    Assume that $x \in S_i^m$ for some $i,m$ and $r \leq 2^{-m+1}$. Then $B_Y(x,r)$ intersects $S_i^m$ in a set of area at least $\min\{r^2/2,2^{-2m}\}$, so that $\mathcal{H}^2(B_Y(x,r)) \geq r^2/4$.   
	\end{case1}
	
	\begin{case1}
	    Assume that $x \in Y \setminus (\bigcup S_i^m)$. Let $m \geq 1$ be such that $r/8 < 2^{-m} \leq r/4$. In the first subcase, assume that $x_2 \leq r/4$. There is a dyadic point $t_i^n$, where $n \leq m$, such that $|x_1 - t_i^n| \leq r/4$. By considering the polygonal line from $x$ to $(x_1,0)$ to $(t_i^n,0)$, we see that $\overline{B}(x,r/2)$ intersects $I_i^m$. Observe that $r/4 < 2^{-m+1}$. We now refer back to the previous case for the ball $B((t_i^n,0),r/4)$ to conclude that $\mathcal{H}^2(B_Y(x,r)) \geq r^2/64$.
	    
	    In the second subcase, we have $x_2 > r/4$. Note that the point $x$, considered as a point in $\mathbb{R}^2$, belongs to a square of the form $[i\cdot 2^{-n},(i+1)2^{-n}] \times [2^{-n},2^{-n+1}]$ for some $n$ satisfying $2^{-n} \leq x_2 \leq 2^{-n+1}$ (here, $i$ and $n$ do not need to be relatively prime). This square may not be contained in $Y_0$; nevertheless, we may subdivide this square into eight right triangles, each with legs parallel to the coordinate axes having length $2^{-n-1}$ and hypotenuse running from bottom-left to top-right. Sliding such triangles around the plane, we can find an isosceles right triangle with legs of length $2^{-n-1}$ that contains $x$ and is contained in $Y_0$ and does not cross any of the dyadic slits. This is where we use the property that $f'(t) \leq 1$ for all $t$. 
	    Then $B_Y(x,r)$ intersects $Y_0$ in a set of area at least $2^{-2(n+1)}/2$. 
	    We use the property that $r^2 < 16x_2^2 \leq 64\cdot 2^{-2n}$ to conclude that $\mathcal{H}^2(B_Y(x,r)) \geq r^2/512$.
	\end{case1}
	
	\begin{case1}
	    Assume that $x \in S_i^m$ for some $i,m$ and $r > 2^{-m+1}$. Then $B_Y(x,r/2)$ contains a point $z$ in $Y \setminus (\bigcup S_i^m)$. We refer to the previous case for the ball $B_Y(z,r/2)$ to conclude that $\mathcal{H}^2(B_Y(x,r)) \geq r^2/2048$.
	\end{case1}
	
	Next, we establish the upper bound on $\mathcal{H}^2(B_Y(x,r))$. Again, we split into cases.
	
	\begin{case2}
	Assume that $x \in Y \setminus (\bigcup S_i^m)$. Let $m \geq 0$ be such that $r/2< 2^{-m} \leq r$. The set $\widehat{Y}_0$ contributes at most $4r^2$ to the area of $B_Y(x,r)$, as does the doubled copy of $Y_0$. Next, for a fixed value $m \in \mathbb{N}$, the ball $B_Y(x,r)$ intersects at most $2r\cdot 2^{m}+1$ pillowcases $S_i^m$, each of which has area $2/4^m$. It follows that the sets $S_i^n$, $n \geq m$, that intersect $B_Y(x,r)$ contribute at most \[\sum_{n=m}^\infty (2^{n+1}r+1)\cdot 2/4^n = \sum_{n=m}^\infty 4r/2^n +2/4^n = 8 r \cdot 2^{-m}+ 8\cdot 4^{m}/3 \leq 11r^2\] to the area of $B_Y(x,r)$. Finally, $B_Y(x,r)$ intersects at most four of the sets $S_i^n$ for $n \leq m$, each one of which contributes at most $4r^2$ to the area of $B_Y(x,r)$. We conclude that $\mathcal{H}^2(B_Y(x,r)) \leq 35r^2$.
	\end{case2}
	
	\begin{case2}
	Assume that $x \in S_i^m$ for some $i,m$ and $r>d(x,Y_0)$. Then $B_Y(x,r)$ contains a point $z \in Y \setminus (\bigcup S_i^m)$. So $B_Y(x,r) \subset B_Y(z,2r)$, and we apply the previous case to conclude that $\mathcal{H}^2(B_Y(x,r)) \leq 140r^2$.
	\end{case2}
	
	\begin{case2}
	Assume that $x \in S_i^m$ for some $i,m$ and $r \leq d(x,Y_0)$. Then $B_Y(x,r) \subset S_i^m$, and we have $\mathcal{H}^2(B_Y(x,r)) \leq 4r^2$.
	\end{case2}
	The finishes the case that $x$ is in $\widehat{Y}_0$ or one of the pillowcases $S_i^m$. In the remaining case, $x$ belongs to the doubled copy of $Y_0$. If $B_Y(x,r/2)$ does not intersect $\widehat{Y}_0$, then the metric on $B_Y(x,r/2)$ agrees with the Euclidean metric and we get the Ahlfors regular lower bound. Otherwise, $\widehat{Y}_0$ intersects $\widehat{Y}_0$ in a point $z$ and we apply the $\widehat{Y}_0$ case to $B_Y(z,r/2)$. Similarly, if $B_Y(x,r)$ does not intersect $\widehat{Y}_0$, we immediately get the Ahlfors regular upper bound. Otherwise, $B_Y(x,r)$ intersects $\widehat{Y}_0$ at a point $z$, and we apply the $\widehat{Y}_0$ case to $B_Y(z,2r)$. We conclude that $Y$ is Ahlfors $2$-regular. 
	\end{proof}
	
	We next check the linear local connectedness property.
	
	\begin{lemm}
	    The space $X$ is linearly locally connected.
	\end{lemm}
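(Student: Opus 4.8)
The plan is to verify directly the two defining conditions \ref{item:llca} and \ref{item:llcb} of linear local connectivity for the product $X = Y\times(-2,2)$, using only that $Y$ is a bounded length space together with two elementary moves in $X$: a \emph{horizontal move} along $\gamma\times\{t\}$, where $\gamma$ is a curve in $Y$ and the height $t$ is held fixed, and a \emph{vertical move} along a segment $\{y\}\times[t,t']$. Since $d_Y$ is a length metric, any $y,z\in Y$ are joined by a curve of length at most $(1+\epsilon)\,d_Y(y,z)$, hence by a curve contained in $B_Y(y,(1+\epsilon)\,d_Y(y,z))$; this quasiconvexity is the only geometric feature of $Y$ that I will use. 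Throughout write a point of $X$ as $(y,t)$, fix a center $c=(x,s)$ and a radius $r$, put $\rho=r/\lambda$ and $D=\diam(Y)$, and recall $D<2$. The constant $\lambda$ will be fixed at the end.

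For condition \ref{item:llca}, I would take $(y_1,s_1),(y_2,s_2)\in B(c,r)$, so $d_Y(x,y_i)<r$ and $|s-s_i|<r$, and join them by concatenating the vertical move from $(y_1,s_1)$ to $(y_1,s)$, a horizontal move from $(y_1,s)$ to $(y_2,s)$ along a near-geodesic of $Y$, and the vertical move from $(y_2,s)$ to $(y_2,s_2)$. The vertical segments lie in $B(c,\sqrt2\,r)$, and by quasiconvexity the horizontal curve lies in $B_Y(x,(3+2\epsilon)r)$, hence at height $s$ in $B(c,(3+2\epsilon)r)$. Letting $\epsilon\to0$, condition \ref{item:llca} holds for every $\lambda\ge4$.

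Condition \ref{item:llcb} is the substantive part, and here the interval factor is decisive. The guiding observation is that for any $y^\ast$ with $d_Y(x,y^\ast)\ge\rho$ the vertical line $\{y^\ast\}\times(-2,2)$ lies entirely outside $B(c,\rho)$, so it is a safe corridor joining all heights. If $\lambda$ is large enough that $\rho<D/2$, such a $y^\ast$ exists because $\sup_y d_Y(x,y)\ge D/2$ for every $x$. Given $(y_1,s_1),(y_2,s_2)\notin B(c,r)$, I would connect each to this corridor while avoiding $B(c,\rho)$: if $|s_i-s|\ge\rho$ the entire height-$s_i$ slice is safe and a horizontal move carries $(y_i,s_i)$ to $(y^\ast,s_i)$; if $|s_i-s|<\rho$ then $d_Y(x,y_i)>\sqrt{r^2-\rho^2}>\rho$ (using $\lambda>\sqrt2$), so the vertical line through $y_i$ is safe, and a vertical move to a safe height $s\pm\rho$ in $(-2,2)$ followed by a horizontal move reaches $(y^\ast,s\pm\rho)$. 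Concatenating the two paths with the corridor yields a continuum joining the given points inside $X\setminus B(c,\rho)$.

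The main obstacle to anticipate is that $B(c,\rho)$ might disconnect $X$. Indeed the slice $Y\times\{s\}$ separates $X$ into the half-slabs $\{t>s\}$ and $\{t<s\}$, and if this slice were swallowed by $B(c,\rho)$, which occurs exactly when $\rho>\sup_y d_Y(x,y)$, then \ref{item:llcb} would fail for points placed on opposite sides. The choice $\rho<D/2\le\sup_y d_Y(x,y)$ made above rules this out and guarantees the corridor, so I expect all constraints to be met by any $\lambda\ge\max\{4,\,4\diam(X)/D\}$, which is finite since $0<D\le\diam(X)<\infty$. It is worth emphasizing that nothing in this argument refers to the cusp of $Y$ or to the attached pillowcases: the extra interval direction always supplies a vertical escape, which is precisely why the construction forms a product with an interval.
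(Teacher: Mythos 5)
Your proof is correct and follows essentially the same approach as the paper: condition \ref{item:llca} comes from the length-metric/quasiconvexity of $X$, and condition \ref{item:llcb} exploits the product structure by routing axis-parallel (vertical and horizontal) paths through a point of $Y$ far from $x$ and heights in $(-2,2)$ far from $s$, so that they avoid the small ball. The only difference is organizational --- you send both points to a single fixed vertical corridor $\{y^\ast\}\times(-2,2)$ with two subcases each, whereas the paper runs a four-case analysis on which coordinate makes each point far from the center --- but the underlying idea and estimates are the same.
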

	\begin{proof}
	Condition \ref{item:llca} in the definition is immediate from the fact that the metric on $X$ is a length metric. Condition \ref{item:llcb} can be established as follows. Let $x \in X$ and $r>0$. If $X \setminus B(x,r)$ is non-empty, we must have $r \leq \diam(X) < 6$. Let $y,z$ be points in $X \setminus B(x,r)$. Write $x$ in components as $x = (x_1,x_2)$, where $x_1 \in Y$ and $x_2\in (-2,2)$. Do similarly for $y,z$. Let $s \in Y$ be such that $d_Y(s,x_1) \geq r/12$. Such an $s$ must exist since $r/12 \leq 1/2< \diam(Y)/2$. Likewise, let $t \in (-2,2)$ be such that $|t-x_2| \geq r/12$. Such a $t$ exists because $r/12<2$.
	
	
	It is immediate that $d_Y(x_1,y_1) \geq r/12$ or $|y_2-x_2| \geq r/12$, and similarly for $z$. There are four cases to consider. If $d_Y(x_1,y_1) \geq r/12$ and $d_Y(x_1,z_1) \geq r/12$, then a piecewise geodesic from $z$ to $(z_1,t)$ to $(y_1,t)$ to $y$ is disjoint from $B(x,r/12)$. If $d_Y(x_1,y_1) \geq r/12$ and $|x_2-z_2| \geq r/12$, then a piecewise geodesic from $z$ to $(y_1,z_2)$ to $y$ is disjoint from $B(x,r/12)$. If $|x_2 - y_2| \geq r/12$ and $|x_2-z_2| \geq r/12$, then a piecewise geodesic from $z$ to $(s,z_2)$ to $(s,y_2)$ to $y$ is disjoint from $B(x,r/12)$. If $|x_2 - y_2| \geq r/12$ and $d_Y(x_1,z_1) \geq r/12$, then a piecewise geodesic from $z$ to $(z_1,y_2)$ to $y$ is disjoint from $B(x,r/12)$. We conclude that condition \ref{item:llcb} holds. 
	\end{proof}
	
	Finally, let $E = \{z\} \times [-1,1]$, where $z = (0,0) \in Y$ is the cusp point. \Cref{thm:example} follows immediately from the following lemma. 
	
	\begin{lemm} \label{lemm:modulus}
	    The family of nontrivial paths intersecting $E$ has $3$-modulus zero.
	\end{lemm}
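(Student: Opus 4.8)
The strategy is to exhibit admissible densities of arbitrarily small $3$-energy and conclude $\Mod_3 \Gamma_E = 0$ by countable subadditivity. The essential geometric feature is that the cusp is so sharp that the cross-section of $Y$ transverse to it at parameter $t$ is a loop of length $2f(t) = \tfrac23 t^3$, which decays fast enough that curves leaving the cusp point $z$ can be charged for free. To organize the reduction, I would introduce a $1$-Lipschitz ``transverse level'' function $T \colon Y \to [0,1)$ defined by $T(x) = x_1$ on $\widehat{Y}_0$ and its doubled copy, and $T \equiv t_i^m$ on each pillowcase $S_i^m$; one checks that $T$ is $1$-Lipschitz for $d_Y$ and that $\{T = 0\} = \{z\}$. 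Let $\widetilde{T} = T \circ \pi$, where $\pi \colon X \to Y$ is the projection. A nontrivial curve $\gamma$ meeting $E$ either satisfies $\pi \circ \gamma \equiv z$, so that $\gamma$ lies on the segment $\ell = \{z\} \times (-2,2)$, or else $\pi \circ \gamma$ reaches a point of $Y$ other than $z$ and hence attains $\sup_\gamma \widetilde{T} \geq 1/j$ for some $j$. Writing $\Gamma_E^j$ for the family of the latter type with $\sup_\gamma \widetilde{T} \ge 1/j$, we have $\Gamma_E \subseteq \Gamma_\ell \cup \bigcup_j \Gamma_E^j$, where $\Gamma_\ell$ is the family of nonconstant curves contained in $\ell$. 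Since $\mathcal{H}^3(\ell) = 0$, the density $+\infty \cdot \mathbf{1}_\ell$ gives $\Mod_3 \Gamma_\ell = 0$, so it suffices to prove $\Mod_3 \Gamma_E^j = 0$ for each fixed $j$.

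For fixed $j$ and $\delta \in (0,1/j)$, set $g(t) = c_\delta t^{-3/2}$ on $(\delta, 1/j)$ and $g = 0$ otherwise, choosing $c_\delta$ so that $\int_\delta^{1/j} g(t)\,dt = 1$; explicitly $c_\delta = (2\delta^{-1/2} - 2j^{1/2})^{-1} \to 0$ as $\delta \to 0$. Define $\rho_\delta = g(\widetilde{T})$ on the part of $X$ lying over $\widehat{Y}_0$ and its doubled copy (i.e.\ off the pillowcases) and $\rho_\delta = 0$ elsewhere. For $\gamma \in \Gamma_E^j$, the function $\widetilde{T} \circ \gamma$ runs continuously from $0$ to at least $1/j$, and since $\widetilde{T}$ is constant on each pillowcase, its variation is produced entirely over the doubled strip. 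Using that $\widetilde{T}$ is $1$-Lipschitz and the Banach indicatrix inequality,
\[\int_\gamma \rho_\delta \, ds \;\geq\; \int_\gamma g(\widetilde{T})\,\bigl|(\widetilde{T} \circ \gamma)'\bigr|\,du \;\geq\; \int_\delta^{1/j} g(t)\,dt = 1,\]
so $\rho_\delta$ is admissible for $\Gamma_E^j$. Because $X$ is locally isometric to $\mathbb{R}^3$ off a set of $\mathcal{H}^3$-measure zero and the level set $\{\widetilde{T} = t\}$ over the doubled strip has length $2f(t) = \tfrac23 t^3$ for a.e.\ $t$, Fubini and the product relation for $\mathcal{H}^3$ yield
\[\int_X \rho_\delta^3 \, d\mathcal{H}^3 = 4 \int_\delta^{1/j} g(t)^3 \cdot \tfrac23 t^3 \, dt = \tfrac83 c_\delta^3 \int_\delta^{1/j} t^{-3/2}\, dt = \tfrac83 c_\delta^2 \xrightarrow[\delta \to 0]{} 0,\]
where the factor $4$ is the length of $(-2,2)$. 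Hence $\Mod_3 \Gamma_E^j = 0$, and the reduction completes the proof.

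The main obstacle, and the only step requiring genuine care, is the claim that placing the density only over the doubled strip, and \emph{not} over the pillowcases, still produces an admissible density, even though the pillowcases carry most of the area near the cusp. This is precisely the point where the construction is engineered to succeed: a curve escaping the cusp cannot advance its transverse coordinate $\widetilde{T}$ while inside a pillowcase, since it enters and exits through the same slit at the fixed level $t_i^m$, so $(\widetilde{T}\circ\gamma)'$ vanishes on pillowcase time intervals and the added pillowcase area never contributes to the cost. The remaining technical points are verifying that $T$ is $1$-Lipschitz across the gluings, that $\{T = 0\} = \{z\}$, and the coarea inequality in the first display; the energy estimate itself is the routine computation shown above.
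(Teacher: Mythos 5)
Your argument is correct, and it takes a genuinely different route from the paper's. The paper fixes a neighborhood $C(\delta,\varepsilon)=B_Y(z,\delta)\times[-1-\varepsilon,1+\varepsilon]$ of $E$, reduces to the condenser family $\Gamma(E,F(\delta_0,\varepsilon_0))$, and uses a two-piece constant density $\delta^{-1}\chi_{C_1}+\varepsilon^{-1}\chi_{C_2}$ that \emph{does} charge the pillowcases (at the cheaper rate $\varepsilon^{-1}$, to catch curves escaping through the top and bottom of the cylinder); the energy bound is then the crude estimate $\mathcal{H}^3(C_1)\lesssim \delta^4$ against the weight $\delta^{-3}$, plus an Ahlfors-regularity bound for $C_2$. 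You instead decompose $\Gamma_E$ by how far the transverse level $\widetilde T$ advances, put a coarea-type density $g(\widetilde T)$ with $g(t)=c_\delta t^{-3/2}$ on the doubled strip only, and charge the pillowcases nothing, justifying admissibility by the observation that $\widetilde T\circ\gamma$ has zero derivative a.e.\ on pillowcase time intervals (each component of the open set of such times stays in a single pillowcase, where $\widetilde T$ is constant). Both proofs are sound; your level-set/indicatrix argument is essentially the sharp Gr\"otzsch-type estimate and shows the capacity vanishes whenever $\int_0 f(t)^{-1/2}\,dt=\infty$ (so it would also handle, e.g., $f(t)=t^2$), whereas the paper's constant-density argument is shorter but needs the slightly stronger decay $\int_0^\delta f=o(\delta^3)$; on the other hand yours requires the extra (true, and correctly flagged) verifications that $T$ is well defined and $1$-Lipschitz across the gluings and that $\{T=0\}=\{z\}$, which the paper's proof avoids by working directly with metric balls. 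One small point worth making explicit in a final write-up: the indicatrix inequality is applied to $\widetilde T\circ\gamma$, which is Lipschitz (hence absolutely continuous) once $\gamma$ is parametrized by arclength, and admissibility is vacuous for curves that are not locally rectifiable.
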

	\begin{proof}
	For all sufficiently small $\delta,\varepsilon>0$, let $C(\delta,\varepsilon) = B_Y(x,\delta) \times [-1-\varepsilon,1+\varepsilon])$ and let $F(\delta,\varepsilon) = \overline{X \setminus C(\delta,\varepsilon)}$. For a point $x \in Y$, let $\widehat{B}(x,\varepsilon) = B_Y(x,\varepsilon) \cap \widehat{Y}_1$. We decompose $C(\delta,\varepsilon)$ into two parts: $C_1(\delta,\varepsilon) = \widehat{B}(x,\delta) \times [-1-\varepsilon,1+\varepsilon]$ and $C_2(\delta,\varepsilon) = C(\delta, \varepsilon) \setminus C_1(\delta, \varepsilon)$. Since $E$ has $3$-measure zero, the family of nontrivial curves contained in $E$ has $3$-modulus zero. Any other curve must intersect $F(\delta,\varepsilon)$ for some $\delta,\varepsilon>0$. 
	
	Fix $\delta_0,\varepsilon_0>0$. To prove the lemma, it suffices to show that $\Mod_3 \Gamma(E,F(\delta_0,\varepsilon_0)) = 0$. Let $\delta \in (0,\delta_0)$ and $\varepsilon \in (0, \varepsilon_0)$, where $\delta < \varepsilon$. Consider now the function $\rho = \delta^{-1} \chi_{C_1(\delta, \varepsilon)} + \varepsilon^{-1}\chi_{C_2(\delta, \varepsilon)}$. A curve $\gamma \in \Gamma(E,F(\delta,\varepsilon))$ must intersect either $\partial B(x,\delta) \times [-1-\varepsilon,1+\varepsilon]$ or $B(x,\varepsilon) \times \{\pm (1+\varepsilon)\}$. In the first case, the curve $\gamma$ must have length at least $\delta$ in $C_1(\delta,\varepsilon)$. In the second case, $\gamma$ must have length at least $\varepsilon$ in $B(x,\delta) \times ([-1-\varepsilon,-1] \cup [1,1+\varepsilon])$. In both cases, using the property that $\delta < \varepsilon$, we conclude that $\int_\gamma \rho\,ds \geq 1$. Thus the function $\rho$ is admissible for $\Gamma(E,F(\delta,\varepsilon))$, and hence for $\Gamma(E,F(\delta_0,\varepsilon_0))$ as well. 
	
	Observe that $\mathcal{H}^3(C_1(\delta,\varepsilon)) \leq 4(1+\varepsilon)\delta^4$, since $C_1(\delta,\varepsilon)$ is contained in the region $\{(x_1,x_2,t) \in \widehat{Y}_0 \times (-2,2): x_1 \leq \delta, x_2 \leq f(x_1), |t| \leq 1+\varepsilon\}$ and the corresponding region for the doubled copy of $Y_0$. We now compute
	\begin{align*}
	  \int_X \rho^3\, d\mathcal{H}^3 & = \int_{C_1(\delta,\varepsilon)} \frac{1}{\delta^3}\,d\mathcal{H}^2 + \int_{C_2(\delta,\varepsilon)} \frac{1}{\varepsilon^3}\,d\mathcal{H}^3 
	  \\ & = \frac{1}{\delta^3} \mathcal{H}^3(C_1(\delta,\varepsilon)) + \frac{1}{\varepsilon^3} \mathcal{H}^3(C_2(\delta,\varepsilon)) \\
	  & \leq \frac{4(1+\varepsilon)\delta^4}{\delta^3} + \frac{C\delta^2(1+\varepsilon)}{\varepsilon^3},
	\end{align*}
	where $C$ is the Ahlfors $2$-regularity constant for $Y$. Letting $\delta \to 0$ shows that $\Mod_3 \Gamma(E,F(\delta_0,\varepsilon_0)) = 0$. This completes the proof. 
	\end{proof}
	
	\smallskip
	
	\section{Concluding remarks}
	
	One consequence of \Cref{thm:example} is that a weakly quasiconformal map that preserves conformal modulus need not be a homeomorphism. Define a new metric space $\widetilde{X}$ by collapsing the set $E$ to a point $[E]$ and giving $\widetilde{X}$ the corresponding length metric. Equivalently, $\widetilde{X}$ is the metric space determined by the length element $\chi_{X \setminus E}$ on $X$. The space $\widetilde{X}$ is homeomorphic to $X$, which can be seen directly or by \cite[p. 3]{Dav:86}. There is a natural quotient map $h\colon X \to \widetilde{X}$ that collapses $E$ to the equivalence class $[E]$ in $\widetilde{X}$ and is the identity elsewhere. Note that $h$ is not a homeomorphism, though it is the uniform limit of homeomorphisms. 
	
	\begin{prop}
	    The quotient map $h\colon X \to \widetilde{X}$ preserves conformal modulus. 
	\end{prop}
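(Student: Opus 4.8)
The plan is to show that $h$ preserves $3$-modulus by splitting each curve family into the part meeting $E$, which is negligible, and the part avoiding $E$, on which $h$ acts as a measure- and length-preserving bijection. Throughout I write $\Gamma_E$ for the family of nonconstant curves in $X$ meeting $E$ and $[E]$ for the collapsed point.

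First I would record the basic geometric behavior of $h$ away from $E$. Since the length element defining $d_{\widetilde X}$ is $\chi_{X\setminus E}$, a curve contained in $X\setminus E$ has the same length in $X$ and in $\widetilde X$. Moreover, for a point $p$ with $d(p,E)=:\eta>0$ and any $q$ with $d(p,q)<\eta/2$, every curve from $p$ to $q$ meeting $E$ has length at least $\eta$, so the infimum defining $d_{\widetilde X}(h(p),h(q))$ is realized by curves avoiding $E$, on which the two length elements coincide; thus $d_{\widetilde X}$ and $d$ agree near $p$. Consequently $h$ restricts to a local isometry, hence an $\mathcal H^3$-preserving bijection, between $X\setminus E$ and $\widetilde X\setminus\{[E]\}$, and since $\mathcal H^3(E)=\mathcal H^3(\{[E]\})=0$ the two measures correspond under $h$. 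From this I obtain, for any family $\Lambda$ of curves contained in $X\setminus E$, the identity $\Mod_3\Lambda=\Mod_3 h(\Lambda)$: the correspondence $\rho\mapsto\rho\circ h^{-1}$ is a bijection between admissible functions for $\Lambda$ and for $h(\Lambda)$, using $\int_{h\circ\gamma}\tilde\rho\,ds=\int_\gamma(\tilde\rho\circ h)\,ds$ for $\gamma$ off $E$, and it preserves the energy $\int\rho^3\,d\mathcal H^3$.

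Second, I would show that the image family $h(\Gamma_E)$ still has zero modulus, which is the main point. Note that $[E]$ is a collapsed segment where $\widetilde X$ fails to be Ahlfors $3$-regular, so the usual logarithmic test function is not available; instead I reuse the functions $\rho_{\delta,\varepsilon}=\delta^{-1}\chi_{C_1(\delta,\varepsilon)}+\varepsilon^{-1}\chi_{C_2(\delta,\varepsilon)}$ from the proof of \Cref{lemm:modulus}. Fix $\delta_0,\varepsilon_0$ and set $\tilde\rho=\rho_{\delta,\varepsilon}\circ h^{-1}$. For a nonconstant curve $\tilde\gamma\in h(\Gamma(E,F(\delta_0,\varepsilon_0)))$ with lift $\gamma$, the admissibility argument of \Cref{lemm:modulus} produces length at least $\delta$ across $\partial B_Y(z,\delta)\times[\cdots]$ or length at least $\varepsilon$ across the caps $B_Y(z,\delta)\times\{\pm(1+\varepsilon)\}$, and in both cases this length is accumulated on $\gamma\cap(X\setminus E)$; since $E$ contributes no length in $\widetilde X$, it follows that $\int_{\tilde\gamma}\tilde\rho\,ds\ge 1$. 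As the energy $\int\tilde\rho^3\,d\mathcal H^3=\int\rho_{\delta,\varepsilon}^3\,d\mathcal H^3\to 0$ when $\delta\to0$, we get $\Mod_3 h(\Gamma(E,F(\delta_0,\varepsilon_0)))=0$. Because every nonconstant curve through $[E]$ reaches a definite distance from $[E]$, the family $h(\Gamma_E)$ is contained, up to the trivial images of curves inside $E$, in a countable union $\bigcup_k h(\Gamma(E,F(\delta_k,\varepsilon_k)))$ with $\delta_k,\varepsilon_k\to0$, so countable subadditivity gives $\Mod_3 h(\Gamma_E)=0$.

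Finally I would assemble the pieces. For an arbitrary family $\Gamma$ in $X$, monotonicity and subadditivity together with $\Mod_3\Gamma_E=0$ (from \Cref{lemm:modulus}) give $\Mod_3\Gamma=\Mod_3(\Gamma\setminus\Gamma_E)$; likewise $\Mod_3 h(\Gamma_E)=0$ gives $\Mod_3 h(\Gamma)=\Mod_3 h(\Gamma\setminus\Gamma_E)$. Applying the first step with $\Lambda=\Gamma\setminus\Gamma_E$ yields $\Mod_3(\Gamma\setminus\Gamma_E)=\Mod_3 h(\Gamma\setminus\Gamma_E)$, and chaining the three equalities gives $\Mod_3 h(\Gamma)=\Mod_3\Gamma$. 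The main obstacle is the second step: since $\widetilde X$ is not Ahlfors regular at $[E]$, one cannot invoke a generic assertion that a point has zero capacity, and must instead check that the specific cusp test functions remain admissible after collapsing, the crucial observation being that their defining crossings occur off the axis $E$.
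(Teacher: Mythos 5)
Your proposal is correct and follows essentially the same route as the paper: decompose $\Gamma$ into $\Gamma_E$ and $\Gamma\setminus\Gamma_E$, use that $h$ is a local isometry off $E$ to preserve the modulus of the latter, and rerun the test functions from \Cref{lemm:modulus} on $\widetilde{X}$ to see that $h(\Gamma_E)$ also has zero modulus. You simply spell out in more detail the step the paper compresses into ``the proof of \Cref{lemm:modulus} shows that $\Mod_3 h(\Gamma_E)=0$ as well,'' including the useful observation that the admissibility crossings occur off the collapsed axis.
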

	\begin{proof}
	Let $\Gamma$ be an arbitrary curve family in $X$. Then $\Gamma$ is the union of two families $\Gamma_E$ and $\Gamma \setminus \Gamma_E$, where $\Gamma_E$ is the family of nontrivial curves in $X$ intersecting $E$. Observe that $h$ is locally an isometry at every image point of every curve in $\Gamma \setminus \Gamma_E$. It follows that $\Mod_3 \Gamma \setminus \Gamma_E = \Mod_3 h(\Gamma \setminus \Gamma_E)$. Moreover, by \Cref{lemm:modulus}, $\Mod_3 \Gamma_E = 0$. In addition, the proof of \Cref{lemm:modulus} shows that $\Mod_3 h(\Gamma_E) = 0$ as well. We conclude that $h$ preserves conformal modulus.
	\end{proof}
	
	The original motivation for this note concerns the equivalence of definitions of weak quasiconformality. Let $X,Y$ be separable spaces with locally finite Hausdorff $n$-measure, where $n>1$ is a real number. According to a theorem of Williams \cite{Wil:12}, a homeomorphism $f \colon X \to Y$ satisfies inequality \eqref{equ:weak_qc} for all curve families $\Gamma$ in $X$ (the ``geometric definition'' of quasiconformality) if and only if belongs to the metric Sobolev space $N^{1,p}(X,Y)$ and satisfies the distortion inequality $g_f(x)^2 \leq K J_f(x)$ for a.e. $x \in X$ (the ``analytic definition'' of quasiconformality). Here, $g_f$ is a representative of the minimal weak upper gradient and $J_f$ is the Jacobian of $f$. It was shown in \cite{NR:21} that in dimension $2$ the same equivalence is true for weakly quasiconformal maps between metric surfaces, which are not necessarily homeomorphisms. The argument also applies to metric manifolds of higher dimension if it can be shown that the set of points in $Y$ whose preimage contains more than one point has measure zero. A sufficient condition for this is the statement that $\Mod(E,F)>0$ for all disjoint nondegenerate continua $E,F$; see Lemma 7.8 in \cite{NR:21}. However, as shown by \Cref{thm:example}, this statement does not always hold. Thus the following question remains open.
	\begin{ques}
	    Let $X,Y$ be metric $n$-manifolds of locally finite Hausdorff $n$-measure. 
	    For any weakly quasiconformal map $f\colon X \to Y$, is it necessarily the case that $\{y \in Y: \#f^{-1}(y) >1 \}$ has Hausdorff $n$-measure measure zero?
	\end{ques}

	\bibliographystyle{abbrv}  
	\bibliography{biblio}

\end{document}